\newtheorem{thm}{Theorem}
\newtheorem{lemma}[thm]{Lemma}
\numberwithin{equation}{section}
\numberwithin{thm}{section}
\newtheorem{remark}[thm]{Remark}\theoremstyle{definition}
\begin{document}

\begin{frontmatter}




\newcommand{\symfont}{\fam \mathfam}

\title{An infinite family of $m$-ovoids of $Q(4,q)$}
\author[add1]{Tao Feng}
\ead{tfeng@zju.edu.cn}
\author[add1]{Ran Tao\corref{cor1}}
\ead{rant@zju.edu.cn}
\cortext[cor1]{Corresponding author}
\address[add1]{School of Mathematical Sciences, Zhejiang University, 38 Zheda Road, Hangzhou 310027, Zhejiang P.R China}

\begin{abstract}
In this paper, we construct an infinite family of $\frac{q-1}{2}$-ovoids of the generalized quadrangle $Q(4,q)$, for $q\equiv 1 (\text{mod}\ 4)$ and $q>5$. Together with \cite{feng2016family} and \cite{bamberg2009tight}, this establishes the existence of $\frac{q-1}{2}$-ovoids in $Q(4,q)$ for each odd prime power $q$.
\end{abstract}



\begin{keyword}
$m$-ovoid \sep Generalized quadrangle \sep Ovoid \sep Parabolic quadric



\MSC[2010] 51E12 \sep 05B25 \sep 51E20
\end{keyword}

\end{frontmatter}


\section{Introduction}
A \textit{generalized quadrangle} (GQ) of order $(s,t)$ is an incidence structure of points and lines with the properties that every two points are incident with at most one line, every point is incident with $t+1$ lines, every line is incident with $s+1$ points, and for any point $P$ and line $\ell$ that are not incident there is a unique point on $\ell$ collinear with $P$. The point-line dual of a GQ of order $(s,t)$ is a GQ of order $(t,s)$. In the case $s=t$, we say that the GQ has order $s$. We are only concerned with thick generalized quadrangles, i.e. those GQs of order $(s,t)$ with $s>1$ and $t>1$. The classical GQs are the point-line incidence structures arising from the finite classical polar spaces of rank 2. For more background on GQ, please refer to the monograph \cite{MR2508121}.

In this paper, we are concerned with $m$-ovoids of the classical GQ $Q(4,q)$. The points and lines of $Q(4,q)$ are respectively the totally singular points and totally singular lines contained in a parabolic quadric of $\text{PG}(4,q)$. An ovoid of $Q(4,q)$ is a point set of $Q(4,q)$ that intersects each totally singular line (i.e. generator) in exactly one point. Ovoids of $Q(4,q)$ are of great importance in finite geometry. For instance, its point-line dual gives rise to spreads of $W(3,q)$, which further give rise to translation planes of order $q^2$ by the Bruck-Bose/Andr\'{e} construction. The list of known ovoids in $Q(4,q)$ is very short. Please refer to \cite{penttila2000ovoids} for a summary of known ovoids of $Q(4,q)$. The concept of $m$-ovoid is first introduced by Thas \cite{thas1989interesting}, as a generalization of ovoids. The notion of $m$-ovoids is now known as a special case of intriguing sets, the latter first introduced in \cite{bamberg2009tight} by Bamberg et al. for generalized quadrangles and later generalized to finite polar spaces in \cite{bamberg2007tight}. In short, an $m$-ovoid of $Q(4,q)$ is a point set of $Q(4,q)$ that intersects each totally singular line in exactly $m$ points. It is known that such point sets can give rise to strongly regular graphs and projective two-weight codes, cf. \cite{bamberg2009tight,calderbank1986geometry}.

We now make a summary of known constructions of $m$-ovoids in $Q(4,q)$. For $q$ even, the GQ $W(3,q)$ is isomorphic to $Q(4,q)$. Cossidente et al. \cite{cossidente2008m} have shown that $W(3,q)$ has $m$-ovoids for all integers $m$, $1\leq m\leq q$. Therefore, $Q(4,q)$ also has $m$-ovoids for all integers $m$, $1\leq m\leq q$. For $q$ odd, Cossidente and Penttila  \cite{cossidente2005hemisystems} constructed a hemisystem of $H(3,q^2)$ for all odd $q$ and subsequently there have been more constructions of hemisystems in $H(3,q^2)$ cf. \cite{bamberg2018new, Korchm2017hemisystem, bamberg2010hemisystems}. By duality, this gives rise to a $\frac{q+1}{2}$-ovoid of $Q^-(5,q)$, whose intersection with a non-tangent hyperplane yields a $\frac{q+1}{2}$-ovoid in $Q(4,q)$. In \cite{feng2016family}, the first author and collaborators constructed the first infinite family of $m$-ovoid of $Q(4,q)$ with $m=\frac{q-1}{2}$ for $q\equiv 3 (\text{mod}\ 4)$, which generalized some sporadic examples listed in \cite{bamberg2009tight}. The following table taken from \cite{bamberg2009tight} lists the known $m$-ovoids for small $q$.

In the end of \cite{feng2016family}, it is commented that ``As for future research, it would be interesting to generalize the examples of $\frac{q-1}{2}$-ovoids of $Q(4,q)$ (with $q=5$ or 9) in Example 5 of \cite{bamberg2009tight} into an infinite family. We could not see any general pattern for the prescribed automorphism groups in those examples.'' In this paper, we are able to construct an infinite family of $\frac{q-1}{2}$-ovoids of $Q(4,q)$ for $q\equiv 1 (\text{mod}\ 4)$ and $q>5$. The construction and proof technique in this paper is very similar to that in \cite{feng2016family}. The main difficulty is to choose the right prescribed automorphism group out of the rich subgroup structures of the group $\text{PGO}(5,q)$. Our family does not seem to generalize the sporadic examples listed in \cite{bamberg2009tight} by examining their automorphism groups.

\begin{table}
\begin{center}
\begin{tabular}{|l| l |l|}
\hline
\textbf{$q$} & \textbf{Known $m$} & \textbf{Unknown $m$}\\
\hline
3 & 1,2,3 & - \\
5 & 1,2,3,4,5 & - \\
7 & 1,3,4,5,7 & 2,6 \\
9 & 1,3,4,5,6,7,9 & 2,8 \\
11 & 1,5,6,7,11 & 2,3,4,8,9,10 \\
\hline
\end{tabular}
\end{center}
\caption{m-ovoids of $Q(4,q)$}
\label{tab1}
\end{table}

\section{The model and the prescribed group}
\subsection{A model for $Q(4,q)$}
Let $q\equiv 1 (\text{mod}\ 4)$ be a prime power, and set $V=\mathbb{F}_{q}\times\mathbb{F}_{q}\times\mathbb{F}_{q^2}\times\mathbb{F}_{q}$. We regard $V$ as a 5-dimensional vector space over $\mathbb{F}_{q}$, and write its element in the form $(x,y,\alpha,z)$, where $x,y,z\in\mathbb{F}_{q}$ and $\alpha\in\mathbb{F}_{q^2}$. We define the quadratic form $Q$ on $V$ as follows $$Q((x,y,\alpha,z))=xy+\alpha^{q+1}+z^2.$$
The polar form $B$ of $Q$ is given by $$B((x,y,\alpha,z),(x',y',\alpha',z'))=xy'+x'y+\alpha\alpha'^{q}+\alpha^{q}\alpha'+2zz'.$$

It is easy to check that the quadratic form $Q$ defined as above is non-degenerate and the associated quadric is a parabolic quadric $Q(4,q)$. Therefore, we can define $Q(4,q)$ as
\[Q(4,q)=\{\langle(x,y,\alpha,z)\rangle|0\neq(x,y,\alpha,z)\in V,\,Q((x,y,\alpha,z))=0\}.\]
In the remaining part of this paper, we will use $(x,y,\alpha,z)$ instead of $\langle(x,y,\alpha,z)\rangle$ to denote a projective point of $\text{PG}(4,q)$ for simplicity.

We choose this model such that the prescribed automorphism group $G$ we introduce now has a good presentation.

For each $\lambda\in\mathbb{F}_{q}^{*}$ and $\mu\in\mathbb{F}_{q^2}^{*}$ with $\mu^{\frac{q+1}{2}}=1$, we define
$$T_{\lambda,\mu}:(x,y,\alpha,z)\mapsto(x\lambda,y\lambda^{-1},\alpha\lambda^{\frac{q-1}{2}}\mu,z),$$
which is an isometry of $Q(4,q)$. Let $H$ be the group generated by all such $T_{\lambda,\mu}$'s, i.e., $H=\langle T_{\lambda,\mu}:\lambda\in\mathbb{F}^{*}_{q},\mu\in\mathbb{F}_{q^2}^{*},\mu^{\frac{q+1}{2}}=1\rangle$.
\begin{lemma}
   The group $H$ defined above is a cyclic subgroup of order $\frac{q^2-1}{2}$ of $\text{PGO}(5,q)$.
   \label{lem1}
  \end{lemma}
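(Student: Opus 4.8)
The plan is to realize $H$ as the image of an explicit abelian group under a homomorphism into $\text{PGO}(5,q)$, and then to separately determine the kernel and the internal structure of that abelian group. Write $M:=\{\mu\in\mathbb{F}_{q^2}^{*}:\mu^{\frac{q+1}{2}}=1\}$ and $G_0:=\mathbb{F}_q^{*}\times M$ with componentwise multiplication. First I would establish the composition law: applying $T_{\lambda_2,\mu_2}$ and then $T_{\lambda_1,\mu_1}$ to a vector $(x,y,\alpha,z)$ and comparing each coordinate yields
$$T_{\lambda_1,\mu_1}\circ T_{\lambda_2,\mu_2}=T_{\lambda_1\lambda_2,\,\mu_1\mu_2},$$
so that $(\lambda,\mu)\mapsto T_{\lambda,\mu}$ is a group homomorphism $\phi\colon G_0\to\text{PGO}(5,q)$ whose image is exactly $H$. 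This also confirms that $M$ is closed under multiplication, so it is a genuine subgroup; since $\mathbb{F}_{q^2}^{*}$ is cyclic of order $q^2-1$ and $\frac{q+1}{2}$ divides $q^2-1$, the set $M$ is its unique cyclic subgroup of order $\frac{q+1}{2}$. Hence $|G_0|=(q-1)\cdot\frac{q+1}{2}=\frac{q^2-1}{2}$.

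Next I would show that $\phi$ is injective. Working projectively, $T_{\lambda,\mu}$ is trivial in $\text{PGO}(5,q)$ precisely when it acts on $V$ as scalar multiplication by some $c\in\mathbb{F}_q^{*}$. Reading off the fixed fourth coordinate ($z\mapsto z$) forces $c=1$; the first coordinate then gives $\lambda=c=1$, and the third coordinate gives $\mu=1$. Thus $\ker\phi$ is trivial, and $H\cong G_0$ has order $\frac{q^2-1}{2}$.

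It remains to prove that $G_0\cong C_{q-1}\times C_{\frac{q+1}{2}}$ is cyclic, which is where the hypothesis $q\equiv 1\pmod 4$ is used in an essential way. A product of two finite cyclic groups is cyclic if and only if their orders are coprime, so I would compute $\gcd\!\left(q-1,\tfrac{q+1}{2}\right)$. Since $q\equiv 1\pmod 4$, the integer $\frac{q+1}{2}$ is odd, so the gcd is odd; and any common divisor of $q-1$ and $\frac{q+1}{2}$ also divides $q+1$, hence divides $\gcd(q-1,q+1)=2$. An odd divisor of $2$ must be $1$, so the gcd is $1$ and $G_0$, and therefore $H$, is cyclic of order $\frac{q^2-1}{2}$.

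I expect the only genuinely delicate point to be this final coprimality argument: the congruence $q\equiv 1\pmod 4$ is exactly what guarantees cyclicity (in the complementary case $q\equiv 3\pmod 4$ the factor $\frac{q+1}{2}$ would be even and share a factor of $2$ with $q-1$). The homomorphism property and the injectivity are routine coordinate computations, and the order of $M$ follows immediately from the cyclicity of $\mathbb{F}_{q^2}^{*}$.
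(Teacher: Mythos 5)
Your proof is correct and follows essentially the same route as the paper: both identify $H$ with (an isomorphic copy of) the direct product $C_{q-1}\times C_{\frac{q+1}{2}}$ and conclude cyclicity from $\gcd\!\left(q-1,\tfrac{q+1}{2}\right)=1$ when $q\equiv 1\ (\text{mod}\ 4)$. The only difference is one of detail: you verify explicitly, via the composition law and the trivial-kernel computation, what the paper dismisses as ``clear'' --- namely that the two factors really do have orders $q-1$ and $\tfrac{q+1}{2}$ inside $\text{PGO}(5,q)$, with no collapse from projective scalar identification.
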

  \begin{proof}
  It is clear that $H$ is the direct product of the two cyclic subgroups $\langle T_{\lambda,1}:\lambda\in\mathbb{F}_{q}^{*}\rangle$ and $\langle T_{1,\mu}: \mu\in\mathbb{F}_{q^2}^{*},\mu^{\frac{q+1}{2}}=1\rangle$, which has order $q-1$ and $\frac{q+1}{2}$ respectively. Since $\text{gcd}(q-1,\frac{q+1}{2})=1$ for $q\equiv 1 (\text{mod}\ 4)$, the claim follows.
  \end{proof}
Furthermore, we define an involution $\tau$ in $\text{PGO}(5,q)$ as follows:
$$\tau:(x,y,\alpha,z)\mapsto(y,x,\alpha^{q},z),$$
which is also an isometry of $Q(4,q)$. Set $G:=\langle H,\tau\rangle$, which is isomorphic to $C_{\frac{q^2-1}{2}}\rtimes C_{2}$, where $C_{k}$ denotes a cyclic group of order $k$. This is the prescribed automorphism group for our putative $m$-ovoids in $Q(4,q)$.

\section{The $G$-orbits}

We now describe the $G$-orbits of $Q(4,q)$ with $G$ as defined above. For a point $P\in \text{PG}(4,q)$, let $O(P)$ denote the $G$-orbit containing $P$.

Let $\gamma$ be a fixed element of $\mathbb{F}_{q^2}$ with $\gamma^{q+1}=-1$, and let $\square_q$ (resp. $\square_{q^2}$) and $\blacksquare_q$ (resp. $\blacksquare_{q^2}$) be the set of nonzero squares of $\mathbb{F}_q$ (resp. $\mathbb{F}_{q^2}$) and nonsquares of $\mathbb{F}_q$ (resp. $\mathbb{F}_{q^2}$) respectively. All the $G$-orbits have length $\frac{q^2-1}{2}$ or $q^2-1$ with the following exceptions:
\begin{enumerate}
\item a unique orbit $O(1,0,0,0)=\{(1,0,0,0),(0,1,0,0)\}$ of length 2;
\item a unique orbit $O(1,-1,0,1)=C\setminus O(1,0,0,0)$ of length $q-1$ ,where $C=\{(x,y,0,z)\in V|xy+z^2=0, (x,y,0,z)\neq (0,0,0,0)\}$ is a conic;
\item a unique orbit $O(0,0,\gamma,1)$ of length $q+1$.
\end{enumerate}
We call the orbits of length $\frac{q^2-1}{2}$ \textit{short} orbits, the orbits of length $q^2-1$ \textit{long} orbits and the remaining orbits \textit{exceptional}.
\begin{lemma}
   Let $x,y\in\mathbb{F}^{*}_{q}$, $\alpha\in\mathbb{F}^{*}_{q^2}$, and $(x,y,\alpha,1)\in Q(4,q)$. Then the $G$-orbit $O(x,y,\alpha,1)$ is a short orbit if and only if $xy(1+xy)\in\square_q$, or equivalently $\alpha^{q+1}(1+\alpha^{q+1})\in\square_{q}$.
   \label{lem2}
  \end{lemma}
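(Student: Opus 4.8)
The plan is to analyze the orbit length through the $G$-stabilizer, exploiting that $[G:H]=2$. First I would record the constraint coming from $P=(x,y,\alpha,1)\in Q(4,q)$: since $Q(P)=xy+\alpha^{q+1}+1=0$ we have $\alpha^{q+1}=-(1+xy)$, whence $\alpha^{q+1}(1+\alpha^{q+1})=xy(1+xy)$; this makes the two asserted conditions literally identical, and since $x,y\neq 0$ and $\alpha\neq 0$ we also get $xy(1+xy)\neq 0$, so the relevant quadratic character is defined.

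Next I would compute the $H$-orbit of $P$. Because the last coordinate is fixed and equal to $1$ under every $T_{\lambda,\mu}$, projective equality of image points reduces to affine equality; solving $T_{\lambda,\mu}(x,y,\alpha,1)=(x,y,\alpha,1)$ forces $\lambda=1$ and then $\mu=1$, so the stabilizer $H_P$ is trivial and $|O_H(P)|=|H|=\tfrac{q^2-1}{2}$. Since $G=H\sqcup H\tau$, the full orbit is $O(P)=O_H(P)\cup O_H(\tau P)$, a union of two $H$-orbits each of length $\tfrac{q^2-1}{2}$. These either coincide or are disjoint, so $O(P)$ is short exactly when $\tau P\in O_H(P)$, i.e. when some $T_{\lambda,\mu}$ satisfies $T_{\lambda,\mu}P=\tau P$.

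Then I would solve that equation. With $\tau P=(y,x,\alpha^{q},1)$, comparing coordinates forces $\lambda=y/x\in\mathbb{F}_q^{*}$ (consistently from the first two slots) and $\mu=\alpha^{q-1}(y/x)^{-(q-1)/2}\in\mathbb{F}_{q^2}^{*}$; the only remaining requirement for $T_{\lambda,\mu}$ to lie in $H$ is $\mu^{(q+1)/2}=1$. Raising to the power $\tfrac{q+1}{2}$ and using $\lambda\in\mathbb{F}_q^{*}$ (so $\lambda^{q}=\lambda$ and $\lambda^{q+1}=\lambda^{2}$), the exponent bookkeeping collapses to $\mu^{(q+1)/2}=(\alpha^{q+1})^{(q-1)/2}\,\lambda^{-(q-1)/2}$. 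Substituting $\alpha^{q+1}=-(1+xy)$ and $\lambda=y/x$ and reading each factor as a quadratic character gives $\mu^{(q+1)/2}=\chi(-(1+xy))\,\chi(xy)$, where $\chi$ denotes the quadratic character of $\mathbb{F}_q$.

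Finally, I would invoke $q\equiv 1\ (\mathrm{mod}\ 4)$, which makes $-1$ a square in $\mathbb{F}_q$, so $\chi(-(1+xy))\chi(xy)=\chi(xy(1+xy))$. Hence $\mu^{(q+1)/2}=1$ holds if and only if $xy(1+xy)\in\square_q$, which is precisely the claimed criterion for the orbit being short. The step I expect to be the main obstacle is the exponent reduction in computing $\mu^{(q+1)/2}$ together with the attendant sign: a naive computation yields $-xy(1+xy)\in\square_q$, and it is exactly the hypothesis $q\equiv 1\ (\mathrm{mod}\ 4)$ (forcing $-1\in\square_q$) that reconciles this with the stated condition. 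Everything else is routine coordinate comparison and orbit--stabilizer counting.
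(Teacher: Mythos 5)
Your proof is correct and follows essentially the same route as the paper: both reduce "short orbit" to the existence of an element of the coset $H\tau$ fixing $P$ (you phrase this as $\tau P\in O_H(P)$, the paper as the stabilizer in $G$ having order $2$), both solve the resulting coordinate equations to pin down $\lambda$ and $\mu$ uniquely, and both translate $\mu^{(q+1)/2}=1$ into $xy\alpha^{q+1}\in\square_q$ and then into $xy(1+xy)\in\square_q$ via $\alpha^{q+1}=-(1+xy)$ and $-1\in\square_q$ for $q\equiv 1\ (\mathrm{mod}\ 4)$. The exponent reduction and sign handling you flagged as the delicate point are exactly the paper's closing step, and your computation of them is right.
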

\begin{proof} $O(x,y,\alpha,1)$ is a short orbit if and only if its stabilizer in $G$ has order 2 since the order of $G$ is $q^2-1$. If $T_{\lambda,\mu}$ stabilizes $(x,y,\alpha,1)$, then there exists a constant $c\in\mathbb{F}^{*}_{q}$ such that $(x\lambda,y\lambda^{-1},\alpha\lambda^{\frac{q-1}{2}}\mu,1)=c\cdot(x,y,\alpha,1)$. It follows that $c=1=\lambda=\lambda^{-1}=\lambda^{\frac{q-1}{2}}\mu$, which gives $\lambda=\mu=1$. If $T_{\lambda,\mu}\cdot\tau$ stabilizes $(x,y,\alpha,1)$, then there exists a constant $c'\in\mathbb{F}_{q}^{*}$ such that $(y\lambda,x\lambda^{-1},\alpha^{q}\lambda^{\frac{q-1}{2}}\mu,1)=c'\cdot(x,y,\alpha,1)$. We get $c'=1$, $\lambda=xy^{-1}$, $\mu=(yx^{-1})^{\frac{q-1}{2}}\alpha^{1-q}$, which indicates that the value of $\lambda,\mu$ is uniquely determined by $x,y,\alpha$ respectively. In order for $T_{\lambda,\mu}\cdot\tau$ to be in $G$, we need $\mu^{\frac{q+1}{2}}=1$, i.e. $(yx^{-1})^{\frac{q-1}{2}}\alpha^{\frac{1-q^2}{2}}=1$, which is equivalent to $xy\alpha^{q+1}$ is a square of $\mathbb{F}_{q}^{*}$. The claim now follows since $xy+\alpha^{q+1}+1=0$ and $-1\in\square_{q}$ for $q\equiv 1 (\text{mod}\ 4)$.
  \end{proof}

Let $\omega$ be a fixed primitive element of $\mathbb{F}_{q^2}$ and let $\gamma$ be a fixed element of $\mathbb{F}_{q^2}$ with $\gamma^{q+1}=-1$ as introduced above. According to Lemma \ref{lem2}, we are now ready to give an explicit description of short and long orbits below. For the size of $S\cap (S'-1)$, where $S,S'\in\{\square_{q},\,\blacksquare_{q}\}$, please refer to Remark \ref{rem1} in the next section. Here $S'-1=\{x-1|\,x\in S'\}$.

There are a total of $q-1$ short orbits of length $\frac{q^2-1}{2}$, which we list below.
\begin{enumerate}
  \item The point set $\{(x,y,\alpha,0)\in Q(4,q)|\,\alpha\neq 0\}$ splits into two orbits, $O(1,-\omega^{2(q+1)},\omega^2,0)$ and $O(1,-\omega^{q+1},\omega,0)$. Both orbits have size $\frac{q^2-1}{2}$.

  \item For each $y\in\square_{q}\cap(\square_{q}-1)$, there are two orbits $O(1,y,\alpha,1)$ and $O(1,y,\alpha^{q},1)$ with $y+\alpha^{q+1}+1=0$. In total, there are $2\cdot|\square_{q}\cap(\square_{q}-1)|=\frac{q-5}{2}$ such orbits of length $\frac{q^2-1}{2}$.

  \item For each $y\in\blacksquare_{q}\cap(\blacksquare_{q}-1)$, there are two orbits $O(1,y,\alpha,1)$ and $O(1,y,\alpha^{q},1)$ with $y+\alpha^{q+1}+1=0$. In total, there are $2\cdot|\blacksquare_{q}\cap(\blacksquare_{q}-1)|=\frac{q-1}{2}$ such orbits of length $\frac{q^2-1}{2}$.
\end{enumerate}

There are a total of $\frac{q+3}{2}$ long orbits of length $q^2-1$, which we list below.
\begin{enumerate}
\item There are two orbits of length $q^2-1$ with points whose second coordinate zero, namely, $O(1,0,\gamma,1)$ and $O(1,0,\gamma\omega^{q-1},1)$.

\item For each $y\in\square_{q}\cap(\blacksquare_{q}-1)$, there is exactly one orbit $O(1,y,\alpha,1)$ with $y+\alpha^{q+1}+1=0$. In total, there are $|\square_{q}\cap(\blacksquare_{q}-1)|=\frac{q-1}{4}$ such orbits of length $q^2-1$.

\item For each $y\in\blacksquare_q\cap(\square_q-1)$, there is exactly one orbit $O(1,y,\alpha,1)$ with $y+\alpha^{q+1}+1=0$. In total, there are $|\blacksquare_{q}\cap(\square_{q}-1)|=\frac{q-1}{4}$ such orbits of length $q^2-1$.
\end{enumerate}

In particular, there are 7 orbits in which a representative has a coordinate being zero: $O(1,0,0,0)$, $O(1,-1,0,1)$, $O(0,0,\omega^{\frac{q-1}{2}},1)$, $O(1,-\omega^{2(q+1)},\omega^2,0)$, $O(1,-\omega^{q+1},\omega,0)$, $O(1,0,\omega^{\frac{q-1}{2}},1)$ and $O(1,0,\omega^{\frac{3(q-1)}{2}},1)$.

\section{The construction of $\frac{q-1}{2}$-ovoids in $Q(4,q)$}

We are now ready to describe the construction of $\frac{q-1}{2}$-ovoids in $Q(4,q)$. Let $\omega$ be a primitive element of $\mathbb{F}_{q^2}^{*}$ as introduced above. Fix a pair $(a,b)$ in $\mathbb{F}_{q}^{*}$ such that
\begin{equation}
1+a^2=b^2.
\label{1paseqba}
\end{equation}
Now we define
\begin{equation}\begin{array}{rl}
\mathcal{M}= &O(1,-1,0,1)\cup O(1,-\omega^{2(q+1)},\omega^2,0)\cup\\
   &O(1,0,\omega^{\frac{q-1}{2}},1) \cup \mathcal{T}\cup  O(1,-b^2,a,1).
\label{eq:1}
\end{array}
\end{equation}
where
\begin{equation}
\mathcal{T}=\{(x,y,\alpha,1)\in Q(4,q)\ |\ 1+b^{-2}xy\in\square_{q},\ xy\alpha\neq0\ \}.
\label{mathcalT}
\end{equation}

\begin{lemma}
The set $\mathcal{T}$ is $G$-invariant with $\frac{(q^2-1)(q-5)}{2}$ points.
\label{lemt}
\end{lemma}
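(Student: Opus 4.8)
The plan is to treat the two assertions separately: first $G$-invariance, then the point count. For invariance, the key observation is that the \emph{product} $xy$ of the first two coordinates, together with the non-vanishing condition $xy\alpha\neq 0$, is preserved by both generators of $G=\langle H,\tau\rangle$. Indeed, for a point $(x,y,\alpha,1)\in\mathcal{T}$ the image $T_{\lambda,\mu}(x,y,\alpha,1)=(x\lambda,\,y\lambda^{-1},\,\alpha\lambda^{\frac{q-1}{2}}\mu,\,1)$ has first-two-coordinate product $(x\lambda)(y\lambda^{-1})=xy$, so the condition $1+b^{-2}xy\in\square_q$ is unchanged, and since $\lambda,\mu\neq 0$ the condition $xy\alpha\neq 0$ is also preserved. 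Likewise $\tau(x,y,\alpha,1)=(y,x,\alpha^q,1)$ has product $yx=xy$ and satisfies $(yx)\alpha^q\neq0$ iff $xy\alpha\neq0$. As both $T_{\lambda,\mu}$ and $\tau$ are isometries of $Q(4,q)$, the images remain on the quadric, so $\mathcal{T}$ is stable under a generating set of $G$ and hence $G$-invariant.

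For the count I would parametrize the points of $\mathcal{T}$ by the value $t:=xy\in\mathbb{F}_q$. Since every point lies on $Q(4,q)$ with $z=1$, the quadric equation gives $xy+\alpha^{q+1}+1=0$, i.e. $\alpha^{q+1}=-1-t$. First I would count, for a fixed admissible $t$, the number of points: because $t\neq0$, the pairs $(x,y)$ with $xy=t$ are parametrized by $x\in\mathbb{F}_q^{*}$ (with $y=t/x\neq0$ forced), giving $q-1$ pairs; and because $t\neq-1$ we have $\alpha^{q+1}=-1-t\in\mathbb{F}_q^{*}$, so the norm equation has exactly $q+1$ solutions $\alpha\in\mathbb{F}_{q^2}^{*}$ (the norm map is surjective with fibres of size $q+1$). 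Thus each admissible $t$ contributes $(q-1)(q+1)=q^2-1$ points.

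It remains to count the admissible $t$, namely those with $t\neq0$, $t\neq-1$, and $1+b^{-2}t\in\square_q$. Here I would substitute $u:=1+b^{-2}t$, which ranges bijectively over $\mathbb{F}_q$; the three conditions become $u\in\square_q$, $u\neq1$ (from $t\neq0$), and $u\neq 1-b^{-2}$ (from $t\neq-1$). Using the defining relation $1+a^2=b^2$, I would rewrite $1-b^{-2}=(b^2-1)/b^2=a^2/b^2=(a/b)^2$, so both excluded values $1$ and $(a/b)^2$ are nonzero squares. The main (only) point requiring care is to check these two excluded squares are \emph{distinct}: $(a/b)^2=1$ would force $a^2=b^2$, hence $1+a^2=a^2$, a contradiction, so they differ and $a\neq0$ guarantees $(a/b)^2\in\square_q$. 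Since $|\square_q|=\tfrac{q-1}{2}$, removing these two values leaves $\tfrac{q-1}{2}-2=\tfrac{q-5}{2}$ admissible values of $u$, hence of $t$. Multiplying by the per-$t$ contribution yields $(q^2-1)\cdot\tfrac{q-5}{2}=\tfrac{(q^2-1)(q-5)}{2}$, as claimed.
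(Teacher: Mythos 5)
Your proof is correct and follows essentially the same route as the paper: both arguments verify $G$-invariance by direct check and count $\mathcal{T}$ by fixing $t=xy$, multiplying the $q-1$ pairs $(x,y)$ by the $q+1$ norm-preimages of $-1-t$, and then counting the $\tfrac{q-1}{2}-2=\tfrac{q-5}{2}$ admissible values of $t$ using $1-b^{-2}=a^{2}b^{-2}\in\square_{q}$. Your version merely spells out what the paper leaves as ``straightforward'' (the invariance under both generators) and recasts the exclusion of $t=0,-1$ via the substitution $u=1+b^{-2}t$, which is a cosmetic difference.
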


\begin{proof}
It is straightforward to check that $\mathcal{T}$ is $G$-invariant. It remains to compute the size of $\mathcal{T}$. We have
\begin{align*}
|\mathcal{T}|
&=(q-1)|\{z\in\mathbb{F}_{q}^{*},\,\alpha\in\mathbb{F}_{q^2}^{*}|\,z+1+\alpha^{q+1}=0,\,1+b^{-2}z\in\square_{q}\}|\\ &=(q^2-1)|\{z\in\mathbb{F}_{q}|\,1+b^{-2}z\in\square_{q},\,z\neq0,-1\}|\\
&=(q^2-1)(|\{z\in\mathbb{F}_{q}|\, 1+b^{-2}z\in \square_{q}\}|-2)\\
&=(q^2-1)\left(\frac{q-1}{2}-2\right)=\frac{(q^2-1)(q-5)}{2}.
\end{align*}
 Here, we have made use of the fact that $1-b^{-2}=a^2b^{-2}$ is a square in the third equality and $|b^2(\square_{q}-1)|=\frac{q-1}{2}$ in the fourth equality. This proof is complete.
\end{proof}

By Lemma \ref{lemt}, we have $$|\mathcal{M}|=(q-1)+2(q^2-1)+\frac{(q^2-1)(q-5)}{2}=\frac{q-1}{2}(q^2+1),$$ which is exactly the size of a $\frac{q-1}{2}$-ovoid in $Q(4,q)$.
Let $\eta$ be the quadratic (multiplicative) character of $\mathbb{F}_{q}$, i.e.,
\begin{equation}
\eta(x)=\left\{\begin{array}{lll}1 & \text{if\  } x\in\square_{q},\\-1 &  \text{if\  } x\in\blacksquare_{q}, \\0 & \text{if\ } x=0. \end{array}\right.
\label{defeta}
\end{equation}
Furthermore, we define the Kronecker delta function $[[\mathcal{P}]]$ as follows
\begin{equation}
[[\mathcal{P}]]=\left\{\begin{array}{ll}1, & \text{if the property\ }\mathcal{P} \text{\ holds},\\0, &  \text{otherwise. } \end{array}\right.
\label{Kronecker}
\end{equation}
\begin{lemma}
\cite[Theorem 5.48]{lidl1997finite} Let $g(x)=a_{2}x^{2}+a_{1}x+a_{0}\in\mathbb{F}_{q}[x]$ with $q$ odd and $a_{2}\neq0$. Set $d=a_{1}^{2}-4a_{0}a_{2}$ and let $\eta$ be the quadratic character of $\mathbb{F}_{q}$. Then $$\sum_{c\in\mathbb{F}_{q}}\eta(g(c))=\left\{\begin{array}{ll}-\eta(a_{2}) & \text{if\  } d\neq 0,\\(q-1)\eta(a_{2}) &  \text{if\  } d=0. \end{array}\right.$$
\label{lem6}
\end{lemma}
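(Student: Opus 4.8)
The plan is to evaluate the character sum by completing the square and reducing to a standard quadratic-character identity. Since $q$ is odd, both $2$ and $a_2$ are units in $\mathbb{F}_q$, so I would write
$$g(c)=a_2\Bigl(c+\tfrac{a_1}{2a_2}\Bigr)^2-\frac{d}{4a_2},\qquad d=a_1^2-4a_0a_2.$$
As $c$ runs over $\mathbb{F}_q$, the shifted variable $u=c+a_1/(2a_2)$ also runs over all of $\mathbb{F}_q$, so
$$\sum_{c\in\mathbb{F}_q}\eta(g(c))=\sum_{u\in\mathbb{F}_q}\eta\Bigl(a_2u^2-\tfrac{d}{4a_2}\Bigr)=\eta(a_2)\sum_{u\in\mathbb{F}_q}\eta\bigl(u^2-\beta\bigr),\qquad \beta=\frac{d}{4a_2^2}.$$
Here I use that $\eta$ is completely multiplicative with the convention $\eta(0)=0$, so $\eta(a_2t)=\eta(a_2)\eta(t)$ holds for every $t$ (including $t=0$). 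Thus the problem reduces to computing $S(\beta):=\sum_{u}\eta(u^2-\beta)$, and the two cases of the lemma correspond exactly to $\beta=0$ (i.e.\ $d=0$) and $\beta\neq0$.

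The case $d=0$ is immediate, since then $S(0)=\sum_{u}\eta(u^2)=\sum_{u\neq0}1=q-1$, because $u^2$ is a nonzero square for each nonzero $u$ while $\eta(0)=0$; this yields $(q-1)\eta(a_2)$. For $d\neq0$ I would group the terms according to the value $t=u^2$. Since the number of $u\in\mathbb{F}_q$ with $u^2=t$ equals $1+\eta(t)$ for every $t$, we obtain
$$S(\beta)=\sum_{t\in\mathbb{F}_q}\bigl(1+\eta(t)\bigr)\eta(t-\beta)=\sum_{t}\eta(t-\beta)+\sum_{t}\eta(t)\eta(t-\beta).$$
The first sum vanishes because $\sum_{s}\eta(s)=0$, and the second is handled by the auxiliary identity below with $c=-\beta$, giving $S(\beta)=-1$ and hence $-\eta(a_2)$.

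The only genuine content is the auxiliary identity $\sum_{t\in\mathbb{F}_q}\eta(t)\eta(t+c)=-1$ for $c\neq0$, which is where I expect the (small) obstacle to lie. I would prove it by discarding the $t=0$ term, which is zero, and writing, for $t\neq0$, $\eta(t)\eta(t+c)=\eta\bigl(t^2(1+c/t)\bigr)=\eta(1+c/t)$. As $t$ ranges over $\mathbb{F}_q^{*}$ the quantity $1+c/t$ ranges over all of $\mathbb{F}_q\setminus\{1\}$, whence $\sum_{t\neq0}\eta(1+c/t)=\sum_{w\neq1}\eta(w)=-\eta(1)=-1$. Everything else is bookkeeping, so the argument is essentially a completion of the square followed by this elementary Gauss-type evaluation.
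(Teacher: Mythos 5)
Your proof is correct in every detail: the completion of the square is valid since $q$ is odd and $a_2\neq 0$, the count $\#\{u:u^2=t\}=1+\eta(t)$ is right for all $t$ including $t=0$, and the auxiliary identity $\sum_{t\in\mathbb{F}_q}\eta(t)\eta(t+c)=-1$ for $c\neq 0$ is established cleanly by the substitution $w=1+c/t$, which bijects $\mathbb{F}_q^{*}$ onto $\mathbb{F}_q\setminus\{1\}$. Note that the paper offers no proof of this lemma at all---it is quoted verbatim as Theorem 5.48 of Lidl and Niederreiter---so there is nothing internal to compare against; your argument is essentially the standard textbook proof of that theorem, and it correctly supplies the self-contained justification the paper delegates to the reference.
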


 \begin{remark}
 \label{rem1}
 Consider the special case $g(x)=x(x+1)$, i.e. $a_1=a_2=1$, $a_0=0$. We have $d=a_{1}^{2}-4a_{0}a_{2}=1$, so $\sum_{c\in\mathbb{F}_{q}}\eta(g(c))=-1$. Let $n_{1},\,n_{2},\,n_{3},\,n_{4}$ be the number of $x$ such that $(\eta(x),\eta(x+1))=$ $(1,1)$, $(1,-1)$, $(-1,1)$, $(-1,-1)$ respectively. Then
 \begin{align*}
 &n_{1}+n_{2}=\frac{q-1}{2}-1,\ n_{3}+n_{4}=\frac{q-1}{2},\\ &n_{1}+n_{3}=\frac{q-1}{2}-1,\ n_{2}+n_{4}=\frac{q-1}{2}.
 \end{align*}
 From $\sum_{c\in\mathbb{F}_{q}}\eta(g(c))=-1$, we get $n_{1}-n_{2}-n_{3}+n_{4}=-1$. We solve from these equations that \[n_{1}=\frac{q-5}{4},\ n_{2}=\frac{q-1}{4},\ n_{3}=\frac{q-1}{4},\ n_{4}=\frac{q-1}{4}.\] In particular, $n_{1}$ is the size of $\square_{q}\cap(\square_{q}-1)$, $n_{2}$ is the size of $\square_{q}\cap(\blacksquare_{q}-1)$, $n_{3}$ is the size of $\blacksquare_{q}\cap(\square_{q}-1)$, and $n_{4}$ is the size of $\blacksquare_{q}\cap(\blacksquare_{q}-1)$ by definition.
\end{remark}
\begin{thm} The point set $\mathcal{M}$ in (\ref{eq:1}) is a $\frac{q-1}{2}$-ovoid of $Q(4,q)$ for $q\equiv 1 (\text{mod}\ 4)$ and $q>5$.
\end{thm}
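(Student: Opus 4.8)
The plan is to verify directly that every generator of $Q(4,q)$ meets $\mathcal{M}$ in exactly $m=\frac{q-1}{2}$ points; since $\mathcal{M}$ is a union of $G$-orbits and $|\mathcal{M}|=\frac{q-1}{2}(q^2+1)$ is already the correct cardinality of a $\frac{q-1}{2}$-ovoid, this intersection condition is all that remains. Because $\mathcal{M}$ is $G$-invariant, $|\ell\cap\mathcal{M}|$ depends only on the $G$-orbit of the generator $\ell$. Rather than enumerate the many line-orbits individually, I would group the generators into a few \emph{types} according to which of the exceptional points they contain (those on $O(1,0,0,0)$, on the conic $C$, or on $O(0,0,\gamma,1)$) and evaluate $|\ell\cap\mathcal{M}|$ for a whole type at once. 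To this end I would fix a parametrization of the totally singular lines: writing a generator through a point with $z=1$ in the form $\{(x(t),y(t),\alpha(t),1):t\in\mathbb{F}_q\}$ together with one point ``at infinity'', so that $x(t),y(t),\alpha(t)$ are affine-linear in $t$ and hence $x(t)y(t)$ is a quadratic polynomial in $t$.

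The heart of the computation is that membership in the bulk piece $\mathcal{T}$ is governed by the quadratic character: for a point $(x,y,\alpha,1)$ with $xy\alpha\neq0$ one has $(x,y,\alpha,1)\in\mathcal{T}$ iff $1+b^{-2}xy\in\square_q$, so its indicator equals $\tfrac12\bigl(1+\eta(1+b^{-2}xy)\bigr)$. Along a generator this turns the $\mathcal{T}$-contribution into $\tfrac12\sum_{t}\bigl(1+\eta(1+b^{-2}x(t)y(t))\bigr)$, which is exactly the sum controlled by Lemma \ref{lem6}: it evaluates to a main term $\frac{q}{2}$ plus a correction $-\eta(a_2)$ or $(q-1)\eta(a_2)$ according to whether the discriminant of $1+b^{-2}x(t)y(t)$ vanishes. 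To this one adds the discrete contributions of the points of $\ell$ lying in the other constituents $O(1,-1,0,1)$, $O(1,-\omega^{2(q+1)},\omega^2,0)$, $O(1,0,\omega^{\frac{q-1}{2}},1)$ and $O(1,-b^2,a,1)$, and subtracts the degenerate points with $xy\alpha=0$ (which lie in the special orbits, not $\mathcal{T}$) that the character-sum formula miscounts. The size bookkeeping for these corrections is precisely the square/non-square pattern count recorded in Remark \ref{rem1}. The claim is that in every type the total collapses to $\frac{q-1}{2}$.

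The delicate point, and the main obstacle, is that the indicator of $\mathcal{M}$ is \emph{not} simply a function of $1+b^{-2}xy$. At the boundary value $xy=-b^2$ (where $1+b^{-2}xy=0$ and $\eta$ vanishes) the fibre splits into two $G$-conjugate short orbits $O(1,-b^2,\alpha,1)$ and $O(1,-b^2,\alpha^q,1)$, only one of which, namely $O(1,-b^2,a,1)$, is placed in $\mathcal{M}$; likewise only one of the paired short orbits in the $z=0$, $\alpha\neq0$ family is included. Thus on each generator I must track precisely which of the two conjugate orbits its points fall into, and arrange that the Weil-type character sum, the discrete corrections, and the excluded degenerate points conspire to cancel to the single value $\frac{q-1}{2}$ uniformly across all generator types. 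Controlling the signs $\eta(\cdot)$ and these conjugacy choices is where essentially all of the work lies, and the hypothesis $q>5$ enters here to rule out the degenerate small-field coincidences (for instance $n_1=\frac{q-5}{4}=0$, making $\mathcal{T}$ and several orbit families empty).
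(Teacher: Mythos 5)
Your overall strategy --- verifying that every generator meets $\mathcal{M}$ in exactly $\frac{q-1}{2}$ points, converting membership in $\mathcal{T}$ into the character expression $\frac{1}{2}\bigl(1+\eta(1+b^{-2}x(t)y(t))\bigr)$, evaluating the resulting sum along a line by Lemma \ref{lem6}, and then repairing the boundary contributions (points with $xy\alpha=0$, and the split pair of conjugate short orbits at $xy=-b^2$) --- is exactly the skeleton of the paper's proof, and your reading of where $q>5$ enters is right (it guarantees $|\mathcal{T}|=\frac{(q^2-1)(q-5)}{2}>0$, Lemma \ref{lemt}). The gap is in your reduction to finitely many cases. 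You propose to type the generators ``according to which of the exceptional points they contain,'' but almost all generators contain no exceptional point at all: the exceptional orbits comprise only $2+(q-1)+(q+1)=2q+2$ points, each lying on $q+1$ lines, so at most $2(q+1)^2$ of the $(q+1)(q^2+1)$ generators meet them. The residual class is precisely where the hard work sits, and it is a union of many $G$-orbits of lines on which the constituent counts genuinely vary: in the paper's Case 3 one finds $|\ell\cap O(1,-\omega^{2(q+1)},\omega^2,0)|=[[y_{1}\in\square_{q}]]$ and $|\ell\cap O(1,-b^{2},a,1)|=\frac{1}{2}\bigl(\eta(y_{1}(4b^2-y_{1}))+1\bigr)$, both depending on the individual line. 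So there is no way to ``evaluate a whole type at once'' with your typing; only the total collapses to $\frac{q-1}{2}$, and proving that requires a finite, explicit organization of the line set that your classification does not provide.

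The missing device is the paper's hyperplane reduction: every line of $\text{PG}(4,q)$, hence every generator, meets the hyperplane $\{y=0\}$, and the points of $Q(4,q)$ with $y=0$ fall into exactly four $G$-orbits, namely $O(1,0,0,0)$, $O(0,0,\omega^{\frac{q-1}{2}},1)$, $O(1,0,\omega^{\frac{q-1}{2}},1)$ and $O(1,0,\omega^{\frac{3(q-1)}{2}},1)$. Since $\mathcal{M}$ is $G$-invariant, it suffices to compute $|\ell\cap\mathcal{M}|$ for the $q+1$ lines through each of these four explicit points; each such pencil is a concrete one-parameter family where your character-sum template applies and where the conjugate-orbit ambiguities (your ``delicate point'') are resolved by explicitly testing the condition $\mu^{\frac{q+1}{2}}=1$, as in the paper's steps (3.3) and (3.5). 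Note that the last two of the four base points lie in \emph{long} orbits, not exceptional ones, and indeed the lines of the paper's Case 3 (with $\beta\neq 2$) contain no exceptional point whatsoever --- these are exactly the generators your typing cannot see. With this reduction (or an equivalent finite covering of the line set by pencils through fixed orbit representatives) substituted for your exceptional-point classification, the rest of your plan goes through essentially as you describe; without it, the proposal never reaches the bulk of the generators.
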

\begin{proof}
We take the same technique as in \cite{feng2016family}, i.e., we show that each line of $Q(4,q)$ meets $\mathcal{M}$ in $\frac{q-1}{2}$ points. Each line of $Q(4,q)$ intersects the hyperplane $\{(x,y,\alpha,z)\in V|\;y=0\}$ in at least one point. There are four $G$-orbits of $Q(4,q)$ that have a representative with second coordinate zero, namely, $O(1,0,0,0)$, $O(0,0,\omega^{\frac{q-1}{2}},1)$, $O(1,0,\omega^{\frac{q-1}{2}},1)$ and $O(1,0,\omega^{\frac{3(q-1)}{2}},1)$. Since $\mathcal{M}$ is $G$-invariant, we only need to consider the lines through $(1,0,0,0)$, $(0,0,\omega^{\frac{q-1}{2}},1)$, $(1,0,\omega^{\frac{q-1}{2}},1)$ or $(1,0,\omega^{\frac{3(q-1)}{2}},1)$. By the assumption of $q>5$ and Lemma \ref{lemt}, the set $\mathcal{T}$ is non-empty.\\

\textbf{Case 1.} The line $\ell$ of $Q(4,q)$ passes through $P=(1,0,0,0)$.

The line $\ell$ intersects the hyperplane $\{(x,y,\alpha,z)\in V:\,x=0\}$ in exactly one point $Q$, perpendicular to $P$, with $Q=(0,y_{1},\alpha_{1},z_{1})$. Since $B(P,Q)=0$, we have $y_{1}=0$ and $\alpha_{1}^{q+1}+z_{1}^2=0$, $z_{1}\neq0$ by the definition of $B$ and $Q(4,q)$. So we can set $z_{1}=1$. Therefore, $\ell=\langle P,Q\rangle$ with $Q=(0,0,\alpha_{1},1)$ for some $\alpha_{1}\in\mathbb{F}_{q^2}^{*}$ with $\alpha_{1}^{q+1}+1=0$. The line $\ell$ can be denoted as $\ell=\{(t,0,\alpha_{1},1)|t\in\mathbb{F}_{q}\}\cup\{(1,0,0,0)\}$. It is clear that $P=(1,0,0,0)\notin\mathcal{M}$ in this case. For any $(x,y,\alpha,z)\in$ $O(1,-1,0,1)$, $O(1,-\omega^{2(q+1)},\omega^2,0)$, $\mathcal{T}$ or $O(1,-b^2,a,1)$, we have $y\neq 0$, so it can not lie on the line $\ell$. Therefore, $|\ell\cap\mathcal{M}|=|\ell\cap O(1,0,\omega^{\frac{q-1}{2}},1)|$. We now show that this size is $\frac{q-1}{2}$.

The orbit $O(1,0,\omega^{\frac{q-1}{2}},1)$ is a long orbit with $q^2-1$ elements. It is the union of
\begin{equation}
U_{1}=\{(\lambda,0,\lambda^{\frac{q-1}{2}}\mu\omega^{\frac{q-1}{2}},1)|\lambda\in\mathbb{F}_{q}^{*},\mu\in\mathbb{F}_{q^2}^{*},\mu^{\frac{q+1}{2}}=1\},
\label{eq:3}
\end{equation}
 and
\begin{equation}
 U_{2}=\{(0,\lambda,-\lambda^{\frac{q-1}{2}}\mu^{-1}\omega^{-\frac{q-1}{2}},1)|\lambda\in\mathbb{F}_{q}^{*},\mu\in\mathbb{F}_{q^2}^{*},\mu^{\frac{q+1}{2}}=1\}.
\label{eq:4}
\end{equation}
 It is clear that $U_{2}\cap\ell=\emptyset$ by examining the second coordinate.
Suppose that the point $(t,0,\alpha_{1},1)$ of $\ell$ lies in $U_{1}$, where $t\in\mathbb{F}_{q}$. Then there exists
$\lambda\in\mathbb{F}_{q}^{*}$, $\mu\in\mathbb{F}_{q^2}^{*}$ with $\mu^{\frac{q+1}{2}}=1$ such that
\[(t,0,\alpha_{1},1)=c\cdot(\lambda,0,\lambda^{\frac{q-1}{2}}\mu\omega^{\frac{q-1}{2}},1)\]
for some $c\in\mathbb{F}_{q}^{*}$. The last coordinate gives that $c=1$ and then the first coordinate gives that $\lambda=t$, which means $t\neq0$. By comparing third coordinate, we get
\[\mu=\alpha_{1}t^{-\frac{q-1}{2}}\omega^{-\frac{q-1}{2}}\]
and
\[\mu^{\frac{q+1}{2}}=\alpha_{1}^{\frac{q+1}{2}}t^{-\frac{q-1}{2}}\omega^{-\frac{q^2-1}{4}}=1.\]
Therefore,
 \begin{align*}
 |\ell\cap U_{1}| &=\#\{t\in\mathbb{F}_{q}^{*}|\,t^{\frac{q-1}{2}}=\alpha_{1}^{\frac{q+1}{2}}\omega^{-\frac{q^2-1}{4}}\}\\
 &=\frac{q-1}{2}.
\end{align*}
The last equality holds since \[\left(\alpha_{1}^{\frac{q+1}{2}}\omega^{-\frac{q^2-1}{4}}\right)^{2}=(\alpha_{1}^{q+1})(\omega^{-\frac{q^2-1}{2}})=1\]
by $\alpha_{1}^{q+1}+1=0$ and then $\alpha_{1}^{\frac{q+1}{2}}\omega^{-\frac{q^2-1}{4}}\in\{1,\,-1\}$.\\

\textbf{Case 2.} The line $\ell$ of $Q(4,q)$ passes through $P=(0,0,\omega^{\frac{q-1}{2}},1)$.

The line $\ell$ intersects the hyperplane $\{(x,y,\alpha,z)\in V|\,z=0\}$ in exactly one point $Q=(x_{1},y_{1},\alpha_{1},0)$, where $x_{1}y_{1}+\alpha_{1}^{q+1}=0$. If $\alpha_{1}=0$, then $x_{1}y_{1}=0$. It means $Q=(1,0,0,0)$ or $(0,1,0,0)$, which lies in the same $G$-orbit, and we are reduced to Case 1. We assume that $\alpha_{1}\neq0$ below. From $P\perp Q$, we get $\text{Tr}_{\mathbb{F}_{q^2}/\mathbb{F}_{q}}(\alpha_{1}\omega^{\frac{q^2-q}{2}})=0$, i.e. $\alpha_{1}=\omega^{-1}a_{1}$ for some $a_{1}\in\mathbb{F}_{q}^{*}$. We can assume that the point $Q$ equals $(1,y_{1},\alpha_{1},0)$ with $y_{1}+\alpha_{1}^{q+1}=0$ since $x_{1}y_{1}\neq0$ and the line \[\ell=\{(1,y_{1},\alpha_{1}+t\omega^{\frac{q-1}{2}},t)|t\in\mathbb{F}_{q}\}\cup\{(0,0,\omega^{\frac{q-1}{2}},1)\}.\]
Obviously, the point $P=(0,0,\omega^{\frac{q-1}{2}},1)\notin\mathcal{M}$ in this case. It is clear that $\alpha_{1}=\omega^{-1}a_{1}$ is a nonsquare of $\mathbb{F}_{q^2}^{*}$. Below we calculate the intersection number of $\ell$ with each part of $\mathcal{M}$ respectively.

\begin{enumerate}
\item[(2.1)] $|\ell\cap O(1,-1,0,1)|=0$.\\
  Suppose that the point $(1,y_{1},\alpha_{1}+t\omega^{\frac{q-1}{2}},t)$ of $\ell$ lies in $O(1,-1,0,1)$, where $t\in\mathbb{F}_{q}$. Then its third coordinate must be 0, i.e., $\alpha_{1}+t\omega^{\frac{q-1}{2}}=0$. We have shown that $\alpha_{1}$ is a nonsquare of $\mathbb{F}_{q^2}^{*}$ above. On the other hand, $-\omega^{\frac{q-1}{2}}t$ is a square or 0 of $\mathbb{F}_{q}$ since $q\equiv 1(\text{mod}\ 4)$. This contradiction completes the proof.


\item[(2.2)] $|\ell\cap O(1,-\omega^{2(q+1)},\omega^2,0)|=0$.\\
Each element in this intersection has a zero last coordinate and the only element in $\ell$ with this property is $Q$. On the other hand, $Q$ is not in $O(1,-\omega^{2(q+1)},\omega^2,0)=\{(\lambda,-\omega^{2(q+1)}\lambda^{-1},\omega^2\lambda^{\frac{q-1}{2}}\mu,0)|\lambda\in\mathbb{F}_{q}^{*},\mu\in\mathbb{F}_{q^2}^{*},\mu^{\frac{q+1}{2}}=1\}$ since otherwise there exists $\lambda\in\mathbb{F}_{q}^{*}$ such that $y_{1}=-\omega^{2(q+1)}\lambda^{-2}$ by comparing the second coordinate, and we get a contradiction since $y_{1}=-\alpha_{1}^{q+1}$ is a nonsquare of $\mathbb{F}_{q}$.

\item[(2.3)] $|\ell\cap O(1,0,\omega^{\frac{q-1}{2}},1)|=0$.\\
Each element in this intersection has a zero second coordinate and the only element in $\ell$ with this property is $P$ since $y_{1}\neq0$. However, $P=(0,0,\omega^{\frac{q-1}{2}},1)\notin O(1,0,\omega^{\frac{q-1}{2}},1)$ because $O(0,0,\omega^{\frac{q-1}{2}},1)$ and $O(1,0,\omega^{\frac{q-1}{2}},1)$ are different $G$-orbits.

\item[(2.4)] $|\ell\cap\mathcal{T}|=\frac{q-1}{2}$.\\
Suppose that the point $(1,y_{1},\alpha_{1}+t\omega^{\frac{q-1}{2}},t)$ of $\ell$ lies in $\mathcal{T}$, where $t\in\mathbb{F}_{q}$. Then there exists
$(x,y,\alpha,1)\in\mathcal{T}$ such that
\[(1,y_1,\alpha_{1}+t\omega^{\frac{q-1}{2}},t)=c\cdot(x,y,\alpha,1)\]
for some $c\in\mathbb{F}_{q}^{*}$. The last coordinate gives that $c=t$. In particular, $t\neq0$. By comparing the other coordinates, we get
 \[1=tx,\,y_{1}=ty,\,\alpha_{1}+t\omega^{\frac{q-1}{2}}=t\alpha.\]
It follows that $x=t^{-1}$, $y=t^{-1}y_{1}$ and $\alpha=t^{-1}\alpha_{1}+\omega^{\frac{q-1}{2}}$. Therefore,
 \begin{align*}
|\ell\cap\mathcal{T}|=\#\{t\in\mathbb{F}_{q}^{*}|1+(tb)^{-2}y_{1}\in\square_{q}\}
 \end{align*}
by the definition of $\mathcal{T}$ in Eqn. (\ref{mathcalT}). As we mentioned above, $y_{1}$ is a nonsquare in $\mathbb{F}_{q}^{*}$. Hence
 \begin{align*}
|\ell\cap\mathcal{T}|&=2\cdot|b^{-2}y_{1}\square_{q}\cap (\square_{q}-1)|\\
&=2\cdot|\blacksquare_{q}\cap(\square_{q}-1)|=\frac{q-1}{2}.
 \end{align*}

\item[(2.5)] $|\ell\cap O(1,-b^{2},a,1)|=0$.\\
The orbit $O(1,-b^{2},a,1)$ is a short orbit with
\begin{equation}
O(1,-b^{2},a,1)=\{(\lambda,-b^2\lambda^{-1},a\lambda^{\frac{q-1}{2}}\mu,1):\lambda\in\mathbb{F}_{q}^{*},\mu\in\mathbb{F}_{q^2}^{*},\mu^{\frac{q+1}{2}}=1\}.
\label{express:Oba}
\end{equation}
Suppose that the point $(1,y_1,\alpha_{1}+t\omega^{\frac{q-1}{2}},t)$ of $\ell$ lies in $O(1,-b^{2},a,1)$, where $t\in\mathbb{F}_{q}$. Then by comparing the coordinates, we get $y_{1}=-t^2b^2$, a contradiction to the fact that $y_{1}$ is a nonsquare of $\mathbb{F}_{q}$.
\end{enumerate}
To sum up, we get $|\ell\cap\mathcal{M}|=\frac{q-1}{2}$. This completes the proof for Case 2.\\

\textbf{Case 3.} The line $\ell$ of $Q(4,q)$ passes through $P=(1,0,\omega^{\frac{q-1}{2}},1)$.

Similar to Case 2, we only need to consider the case that $\ell$ passes through a point $Q=(1,-\alpha_{1}^{q+1},\alpha_{1},0)$ for some $\alpha_{1}\in\mathbb{F}_{q^2}^{*}$. From $P\perp Q$, we deduce that $-\alpha_{1}^{q+1}+\text{Tr}_{\mathbb{F}_{q^2}/\mathbb{F}_{q}}(\alpha_{1}\omega^{\frac{q^2-q}{2}})=0$. Set
\[y_{1}=-\alpha_{1}^{q+1},\quad \beta=\alpha_{1}\omega^{\frac{-(q-1)}{2}}.\]
Then
\begin{equation}
\beta^{q+1}=(\alpha_{1}\omega^{\frac{-(q-1)}{2}})^{q+1}=-\alpha_{1}^{q+1}=y_{1},
\label{y1eqbetaqp1}
\end{equation}
and
\begin{equation}
\text{Tr}_{\mathbb{F}_{q^2}/\mathbb{F}_{q}}(\beta)=-\text{Tr}_{\mathbb{F}_{q^2}/\mathbb{F}_{q}}(\alpha_{1}\omega^{\frac{q^2-q}{2}})=-\alpha_{1}^{q+1}=y_{1}.
\label{y1eqTrbeta}
\end{equation}
In particular, $\beta^{q+1}=\beta+\beta^{q}$ with $\beta\in\mathbb{F}_{q^2}^{*}$, which is equivalent to
\begin{equation}
\beta^{q}=\beta^{q-1}+1
\label{betaqeqbetaqm1p1}
\end{equation}
and
\begin{equation}
\beta-1=\beta^{-(q-1)}.
\label{betam1eqbetaqm1}
\end{equation}
 We claim that $\beta\in\mathbb{F}_{q}$ if and only if $\beta=2$ : if $\beta\in\mathbb{F}_{q}$, then $\beta=\beta^{q}=\beta^{q-1}+1=2$ by Eqn. (\ref{betaqeqbetaqm1p1}); the converse is obvious. The proof for the case $\beta=2$ and $\beta\neq 2$ are basically the same and the former is easier, so we will only prove the case $\beta\neq2$ below. In this case, we know that the minimal polynomial of $\beta$ over $\mathbb{F}_{q}$ is $X^{2}-y_{1}X+y_{1}$ by Eqn. (\ref{y1eqbetaqp1}) and Eqn. (\ref{y1eqTrbeta}), and its discriminant $d'=y_{1}(y_{1}-4)$ is a nonsquare of $\mathbb{F}_{q}$ since $\beta\notin\mathbb{F}_{q}$.

With all the preparations, we are now ready to compute the intersection size of $\ell$ with each part of $\mathcal{M}$. We have
\[\ell=\{(t+1,y_1,(\beta+t)\omega^{\frac{q-1}{2}},t)|t\in\mathbb{F}_{q}\}\cup\{(1,0,\omega^{\frac{q-1}{2}},1)\}\]
with $y_1\neq 4$ (i.e. $\beta\neq 2$) and $\beta=\alpha_{1}\omega^{\frac{-(q-1)}{2}}$. It is clear that $P=(1,0,\omega^{\frac{q-1}{2}},1)\in O(1,0,\omega^{\frac{q-1}{2}},1)\subset\mathcal{M}$ in this case.

\begin{enumerate}

\item[(3.1)] $|\ell\cap O(1,-1,0,1)|=0$.\\
Suppose that the point $(t+1,y_1,(\beta+t)\omega^{\frac{q-1}{2}},t)$ of $\ell$ lies in $O(1,-1,0,1)$, where $t\in\mathbb{F}_{q}$. Comparing the third coordinate gives that $\beta=-t\in\mathbb{F}_{q}$, i.e., $y_1=4$. This is a contradiction.

\item[(3.2)] $|\ell\cap O(1,-\omega^{2(q+1)},\omega^2,0)|=[[y_{1}\in\square_{q}]]$.\\
Suppose that the point $(t+1,y_1,(\beta+t)\omega^{\frac{q-1}{2}},t)$ of $\ell$ lies in the G-orbit $O(1,-\omega^{2(q+1)},\omega^2,0)$. The last coordinate must be zero, then $t=0$. We have the point $(1,y_{1},\beta\omega^{\frac{q-1}{2}},0)\in\ell\cap O(1,-\omega^{2(q+1)},\omega^2,0)$ if and only if $y_{1}$ is a nonzero square of $\mathbb{F}_{q}$ by comparing the coordinates. Therefore, this claim follows by the definition of the Kronecker delta function in Eqn. (\ref{Kronecker}).

\item[(3.3)] $|\ell\cap O(1,0,\omega^{\frac{q-1}{2}},1)|=1$.\\
On one hand, $P$ is a common point of $\ell$ and $O(1,0,\omega^{\frac{q-1}{2}},1)$. On the other hand, suppose that there is a point $(t+1,y_1,(\beta+t)\omega^{\frac{q-1}{2}},t)$ of $\ell$ that lies in $O(1,0,\omega^{\frac{q-1}{2}},1)$ with $y_{1}\neq 0$. Since $y_{1}\neq 0$, this point must be in $U_{2}$ by Eqn. (\ref{eq:3}) and Eqn. (\ref{eq:4}). So we obtain that $t=-1$. There exist $\lambda\in\mathbb{F}_{q}^{*}$, $\mu\in\mathbb{F}_{q^2}^{*}$ with $\mu^{\frac{q+1}{2}}=1$ such that \[(0,y_{1},(\beta-1)\omega^{\frac{q-1}{2}},-1)=c\cdot(0,\lambda,-\lambda^{\frac{q-1}{2}}\mu^{-1}\omega^{-\frac{q-1}{2}},1)\] for some $c\in\mathbb{F}_{q}^{*}$. It follows that $c=-1$, $\lambda=-y_{1}$,
\[\mu=\lambda^{\frac{q-1}{2}}(\beta-1)^{-1}\omega^{-(q-1)},\]
and
\[\mu^{\frac{q+1}{2}}=-\lambda^{\frac{q-1}{2}}(\beta-1)^{-\frac{q+1}{2}}=1.\]
Then we get
\begin{equation}
y_{1}^{\frac{q-1}{2}}=(-\lambda)^{\frac{q-1}{2}}=-(\beta-1)^{\frac{q+1}{2}}=-(\beta^q-1)^{\frac{q^2+q}{2}}.
\label{y1betaanre}
\end{equation}
We deduce that $y_{1}^{\frac{q-1}{2}}=-\beta^{\frac{q^2-1}{2}}$ by Eqn. (\ref{betaqeqbetaqm1p1}) and Eqn. (\ref{y1betaanre}). On the other hand, $y_{1}=\beta^{q+1}$ by Eqn. (\ref{y1eqbetaqp1}), which gives $y_{1}^{\frac{q-1}{2}}=\beta^{\frac{q^2-1}{2}}$. This is a contradiction.

\item[(3.4)] $|\ell\cap\mathcal{T}|=\frac{q-3}{2}-\frac{1}{2}(\eta(y_{1}(4b^2-y_{1}))+1)-[[y_{1}\in\square_{q}]]$.\\
Suppose that the point $(t+1,y_1,(\beta+t)\omega^{\frac{q-1}{2}},t)$ of $\ell$ lies in $\mathcal{T}$, where $t\in\mathbb{F}_{q}$. Then there exists
$(x,y,\alpha,1)\in\mathcal{T}$ such that
\[(t+1,y_1,(\beta+t)\omega^{\frac{q-1}{2}},t)=c\cdot(x,y,\alpha,1)\]
for some $c\in\mathbb{F}_{q}^{*}$. The last coordinate gives that $c=t$. In particular, $t\neq0$ and $t\neq-1$ since $t+1=tx$, $tx\neq0$ and $t=c$. By comparing the other coordinates, we get
 \[t+1=tx,\,y_{1}=ty,\,(\beta+t)\omega^{\frac{q-1}{2}}=t\alpha.\]
It follows that $x=1+t^{-1}$, $y=t^{-1}y_{1}$ and $\alpha=t^{-1}(\beta+t)\omega^{\frac{q-1}{2}}$. Therefore,
 \begin{align*}
|\ell\cap\mathcal{T}|&=\#\{t\in\mathbb{F}_{q}|\,1+b^{-2}(1+t^{-1})t^{-1}y_{1}\in\square_{q},\,t\neq 0,-1\}\\
&=\#\{t\in\mathbb{F}_{q}|\,b^{2}t^2+y_{1}t+y_{1}\in\square_{q},\,t\neq 0,-1\}.
 \end{align*}
Set $g(X)=b^2X^2+y_{1}X+y_{1}$. We have $g(0)=y_{1}$ and $g(-1)=b^2\in\square_{q}$. Let $\eta$ be the quadratic character of $\mathbb{F}_{q}$ as introduced in Eqn. (\ref{defeta}). The discriminant of $g(X)$ is nonzero, i.e., $y_{1}^{2}-4b^2y_{1}\neq0$. Otherwise, $y_{1}=4b^2$ and $d'=y_{1}(y_{1}-4)=4b^2(4(b^2-1))=16b^2a^2\in\square_{q}$, a contradiction. Hence $g(X)=0$ has 2 or 0 solutions in $\mathbb{F}_{q}$ according as the discriminant $y_{1}^{2}-4b^2y_{1}$ is a square or not. In other words, the number of solutions to $g(X)=0$ in $\mathbb{F}_{q}$ equals $\eta(y_{1}(4b^2-y_{1}))+1$. Then we obtain that
 \begin{align*}
&|\ell\cap\mathcal{T}|=\frac{1}{2}\sum_{t\in\mathbb{F}_{q}, b^2t^2+y_{1}t+y_{1}\neq0}\left(\eta(b^2t^2+y_{1}t+y_{1})+1\right)-[[y_{1}\in\square_{q}]]-1\\
&=\frac{1}{2}\sum_{t\in\mathbb{F}_{q}}\left(\eta(b^2t^2+y_{1}t+y_{1})+1\right)-\frac{1}{2}|\{t\in\mathbb{F}_{q}|\,b^2t^2+y_{1}t+y_{1}=0\}|-[[y_{1}\in\square_{q}]]-1\\
&=\frac{q-2}{2}+\frac{1}{2}\sum_{t\in\mathbb{F}_{q}} \eta(b^2t^2+y_{1}t+y_{1})-\frac{1}{2}|\{t\in\mathbb{F}_{q}|\,b^2t^2+y_{1}t+y_{1}=0\}|-[[y_{1}\in\square_{q}]]\\
&=\frac{q-2}{2}+\frac{1}{2}(-\eta(b^2))-\frac{1}{2}|\{t\in\mathbb{F}_{q}|\,b^2t^2+y_{1}t+y_{1}=0\}|-[[y_{1}\in\square_{q}]]\\
&=\frac{q-3}{2}-\frac{1}{2}\left(\eta(y_{1}(4b^2-y_{1}))+1\right)-[[y_{1}\in\square_{q}]].
 \end{align*}
We get the fourth equality by $y_{1}^2-4b^2y_{1}\neq0$ and Lemma \ref{lem6}.
\item[(3.5)] $|\ell\cap O(1,-b^{2},a,1)|=\frac{1}{2}\left(\eta(y_{1}(4b^2-y_{1}))+1\right)$.\\
In the case of $\eta(y_{1}(4b^2-y_{1}))=-1$, i.e., $y_{1}(4b^2-y_{1})\in\blacksquare_{q}$, we show that $|\ell\cap O(1,-b^2,a,1)|=0$. Recall that the elements of $O(1,-b^{2},a,1)$ are listed in Eqn. (\ref{express:Oba}). Suppose that the point $(t+1,y_1,(\beta+t)\omega^{\frac{q-1}{2}},t)$ of $\ell$ lies in $O(1,-b^2,a,1)$, then there exists
$\lambda\in\mathbb{F}_{q}^{*}$, $\mu\in\mathbb{F}_{q^2}^{*}$ with $\mu^{\frac{q+1}{2}}=1$ such that
\[(t+1,y_1,(\beta+t)\omega^{\frac{q-1}{2}},t)=c\cdot(\lambda,-b^2\lambda^{-1},a\lambda^{\frac{q-1}{2}}\mu,1)\]
for some $c\in\mathbb{F}_{q}^{*}$. By comparing coordinates, we get
\[c=t,\,t+1=c\lambda,\,y_{1}=-cb^2\lambda^{-1},\,(\beta+t)\omega^{\frac{q-1}{2}}=ca\lambda^{\frac{q-1}{2}}\mu.\]
In particular, $t\neq0,-1$ since $c\neq0$ and $\lambda\neq0$. It follows that $\lambda=1+t^{-1}$. Plugging into the third equation above, we get
 \[y_{1}\lambda+cb^2=y_{1}(1+t^{-1})+tb^2=0,\ \textup{i.e.},\ b^2t^2+y_{1}t+y_{1}=0.\]
It means that $b^2X^2+y_{1}X+y_{1}=0$ has a solution in $\mathbb{F}_{q}$. However, the equation $b^2X^2+y_{1}X+y_{1}=0$ has discriminant $y_{1}(4b^2-y_{1})\in\blacksquare_{q}$ by assumption, so has no solution in $\mathbb{F}_{q}$, a contradiction.

Next, we consider the case $\eta(y_{1}(4b^2-y_{1}))=1$, i.e., $y_{1}(4b^2-y_{1})\in\square_{q}$, and we show that $|\ell\cap O(1,-b^2,a,1)|=1$. Suppose that the point $(t+1,y_1,(\beta+t)\omega^{\frac{q-1}{2}},t)$ of $\ell$ lies in $O(1,-b^2,a,1)$. By the same argument, we have $t\neq0,-1$ and there exists $\lambda\in\mathbb{F}_{q}^{*}$, $\mu\in\mathbb{F}_{q^2}^{*}$ with $\mu^{\frac{q+1}{2}}=1$ such that
\[\lambda=t^{-1}+1,\,b^2t^2+y_{1}t+y_{1}=0,\]
and
\[\mu=(\beta+t)\omega^{\frac{q-1}{2}}(at(t^{-1}+1)^{\frac{q-1}{2}})^{-1}.\]
In this case, $b^2X^2+y_{1}X+y_{1}=0$ has two distinct solutions $t_{1}$, $t_{2}$ in $\mathbb{F}_{q}$ such that
\begin{equation}
\left\{\begin{array}{l}
t_{1}+t_{2}=-y_{1}b^{-2},\\
t_{1}t_{2}=y_{1}b^{-2}.
\end{array}\right.
\label{eqt1t2}
\end{equation}
We deduce that
\[t_1^{-1}+t_{2}^{-1}=\frac{t_{1}+t_{2}}{t_{1}t_{2}}=-1\] from (\ref{eqt1t2}) and
\begin{equation}
 (t_{1}^{-1}+1)(t_{2}^{-1}+1)=(t_{1}t_{2})^{-1}+(t_{1}^{-1}+t_{2}^{-1})+1=b^2y_{1}^{-1}.
 \label{t1t2inv}
 \end{equation}

Let $\lambda_{i},\,\mu_{i}$ be the corresponding value of $\lambda,\mu$ when $t=t_{i}$, $i=1,2$. We now show that exactly one of the $\mu_{i}$'s satisfies that $\mu_{i}^{\frac{q+1}{2}}=1$ and the claim follows. We compute that
\begin{align*}
\mu_{i}^{q+1}&=\omega^{\frac{q^2-1}{2}}(\beta^q+t_{i})(\beta+t_{i})(a^2t_{i}^2)^{-1}(t_{i}^{-1}+1)^{-(q-1)}\\
&=-(\beta^{q+1}+(\beta^q+\beta)t_{i}+t_{i}^2)(a^2t_{i}^2)^{-1}\\
&=-(y_{1}+t_{i}y_{1}+t_{i}^2)(a^2t_{i}^2)^{-1}\\
&=-(-(b^2-1)t_{i}^2)(a^2t_{i}^2)^{-1}=1.
\end{align*}
for $i=1,\,2$. Here we have made use of Eqn. (\ref{y1eqbetaqp1}), Eqn. (\ref{y1eqTrbeta}), Eqn. (\ref{1paseqba}) and the fact that $a,\,t_{i}\in\mathbb{F}_{q}$, $b^2t_{i}^2+y_{1}t_{i}+y_{1}=0$ for $i=1,\,2$. Hence, we have $\mu_{i}^{\frac{q+1}{2}}\in\{1,\,-1\}$ for $i\in\{1,\,2\}$.

We next compute that
\begin{align*}
\mu_{1}^{\frac{q+1}{2}}\mu_{2}^{\frac{q+1}{2}}&=\left(\frac{(\beta+t_{1})\omega^{\frac{q-1}{2}}}{at_{1}(t_{1}^{-1}+1)^{\frac{q-1}{2}}}\cdot\frac{(\beta+t_{2})\omega^{\frac{q-1}{2}}}{at_{2}(t_{2}^{-1}+1)^{\frac{q-1}{2}}}\right)^{\frac{q+1}{2}}\\
&=\omega^{\frac{q^2-1}{2}}((t_{1}^{-1}+1)(t_{2}^{-1}+1))^{-\frac{q-1}{2}}\left(\frac{\beta^2+(t_{1}+t_{2})\beta+t_{1}t_{2}}{a^2t_{1}t_{2}}\right)^{\frac{q+1}{2}}\\
&=-y_{1}^{\frac{q-1}{2}}\left[(\beta^2-y_{1}b^{-2}(\beta-1))\left({a^2y_{1}b^{-2}}\right)^{-1}\right]^{\frac{q+1}{2}}\\
&=-y_{1}^{\frac{q-1}{2}}\left[(\beta^2-y_{1}b^{-2}\beta^{-(q-1)})(a^2y_{1}b^{-2})^{-1}\right]^{\frac{q+1}{2}}\\
&=-y_{1}^{\frac{q-1}{2}}\left((b^2-1)\beta^2(a^{2}y_{1})^{-1}\right)^{\frac{q+1}{2}}\\
&=-y_{1}^{\frac{q-1}{2}}\cdot y_{1}\cdot y_{1}^{-\frac{q+1}{2}}=-1.
\end{align*}
Here, we have made use of Eqn. (\ref{t1t2inv}), Eqn. (\ref{eqt1t2}), Eqn. (\ref{betam1eqbetaqm1}) and Eqn. (\ref{1paseqba}).

Therefore, there is exactly one $\mu_{i}$ such that $\mu_{i}^{\frac{q+1}{2}}=1$ for $i\in\{1,2\}$.
\end{enumerate}

To sum up, we deduce that $|\ell\cap\mathcal{M}|=\frac{q-1}{2}$, and the proof for Case 3 is complete.\\

\textbf{Case 4.} The line $\ell$ of $Q(4,q)$ passes through $P=(1,0,\omega^{\frac{3(q-1)}{2}},1)$. This case is almost the same as Case 3 and we omit the details.\\

To sum up, each line of $Q(4,q)$ intersects $\mathcal{M}$ in $\frac{q-1}{2}$ points. Thus, $\mathcal{M}$ is a $\frac{q-1}{2}$-ovoid of $Q(4,q)$. This completes the proof.
\end{proof}

\begin{remark}
We define an isometry of order 2 of $Q(4,q)$ as follows: $$\sigma: (x,y,\alpha,z)\rightarrow (y,x,\alpha,z)$$
Then $\sigma$ stabilizes our $\frac{q-1}{2}$-ovoid $\mathcal{M}$ by direct check, and $\langle\sigma\rangle$ normalizes $G$. For $q=9,\,13,\,17$, we have checked with Magma \cite{cannon2006handbook} that $\langle G,\sigma\rangle$, which is isomorphic to $C_{\frac{q^2-1}{2}}\rtimes(C_{2}\times C_{2})$, is the full stabilizer of $\mathcal{M}$ in $\text{PGO}(5,q)$.
\end{remark}
\section{Concluding remarks}
In this paper, we have constructed $\frac{q-1}{2}$-ovoids in $Q(4,q)$ for $q\equiv 1 (\text{mod}\ 4)$, $q>5$. Together with the results in \cite{feng2016family} and \cite{bamberg2009tight}, this shows that $\frac{q-1}{2}$-ovoids exist in $Q(4,q)$ for all odd prime power $q$. Our technique is similar to that in \cite{feng2016family} and our main contribution is that we find the correct prescribed automorphism group to make the technique in \cite{feng2016family} applicable in our case. This was a challenge due to the rich subgroup structure of $\text{PGO}(5,q)$. The determination of the spectrum of $m$-ovoids in $Q(4,q)$, i.e., to determine for which $m$ there is an $m$-ovoid in $Q(4,q)$, seems out of reach for the moment.

\section*{Acknowledgement}
This work was supported by National Natural Science Foundation of China under
Grant No. 11771392.


\end{document}